\newcommand{\fracs}[2]{{ \textstyle \frac{#1}{#2} }}
\newtheorem{theorem}{Theorem}[section]
\newtheorem{corollary}{Corollary}[section]
\newtheorem{remark}{Remark}[section]
\newenvironment{proof}{
 \bgroup\noindent\small{\bf Proof\ }}{
 \nolinebreak\hbox{\ $\Box$}
 \egroup}
\newcommand{\dW}{\, {\rm d} W}
\newcommand{\dM}{\, {\rm d} M}
\newcommand{\dt}{\, {\rm d} t}
\newcommand{\ds}{\, {\rm d} s}
\newcommand{\dX}{\, {\rm d} X}
\newcommand{\dx}{\, {\rm d} x}
\newcommand{\dy}{\, {\rm d} y}
\newcommand{\dv}{\, {\rm d} v}
\newcommand{\dw}{\, {\rm d} w}
\newcommand{\e}{\, {\rm e}}
\newcommand{\Ito}{It{\^o} }
\newcommand{\EE}{\mathbb{E}}
\newcommand{\VV}{\mathbb{V}}
\newcommand{\RR}{\mathbb{R}}
\newcommand{\ZZ}{\mathbb{Z}}
\begin{document}

\title{
Mean-square stability and error analysis of
implicit time-stepping schemes for linear parabolic SPDEs with multiplicative Wiener noise in the first derivative}
\author{Christoph Reisinger\thanks{Mathematical Institute
        and Oxford-Man Institute of Quantitative Finance, University of Oxford,
        {\tt christoph.reisinger@maths.ox.ac.uk}}}

\maketitle

\begin{abstract}
In this article, we extend a Milstein finite
difference scheme introduced in \cite{gr11}
for a certain linear stochastic partial differential equation (SPDE),
to semi- and fully implicit timestepping as introduced by \cite{szpruch} for SDEs.
We combine standard finite difference Fourier analysis for PDEs
with the linear stability analysis in \cite{buckwar} for SDEs, to analyse the stability and accuracy.
The results show that Crank-Nicolson time-stepping for the principal part of the drift with a partially implicit but negatively
weighted double \Ito integral gives unconditional stability over all parameter values, and converges
with the expected order in the mean-square sense.
This opens up the possibility of local mesh refinement in the
spatial domain, and we show experimentally that this can be beneficial in the presence of reduced regularity at
boundaries.
\end{abstract}

\noindent
{\small
{\bf Keywords}: Stochastic partial differential equations, finite differences, implicit timestepping schemes, Fourier analysis, local mesh refinement}

\section{Introduction}


The numerical analysis and computation of stochastic partial differential equations (SPDEs) have become a subject of active research over the recent past.
The interest has been triggered partly by applications in areas as diverse as geophysics \cite{wt02} and mathematical finance \cite{heath92,mz09},
and has led to questions regarding the complexity theory of their 
approximation \cite{mgr07, mgrw07}.

In a prominent class of SPDEs, the stochasticity enters 
via a random driver of the form
\begin{equation}
\label{stdspde}
\dv = (A v + f(v)) \dt + g(v) \dM_t,
\end{equation}
where $A$ is a linear elliptic operator, e.g.\ the Laplace operator, $M$ a martingale driver, often standard Brownian motion, and
$f$ and $g$ non-linear functions, e.g.\ with Lipschitz regularity. This leads to special cases with additive or multiplicative noise terms.
We will consider here a variant of equation (\ref{stdspde}).


Typical solutions are by lattice methods, e.g.\
\cite{gyongy, gyongy97},
finite differences, e.g.\ \cite{roth02}, or by finite elements, see e.g.\ \cite{walsh},
with extensions to higher order Taylor schemes
\cite{jk09, jk10, jetal11}, as well as
multilevel schemes \cite{barth2}.



In the following, let $(\Omega,\mathcal{F},P)$ a probability space,
$M$ a one-dimensional standard Brownian motion adapted to $\mathcal{F}$.
We study specifically the equation
\begin{equation}
\label{spde}
\dv = -\mu\, \frac{\partial v}{\partial x} \dt 
+ \frac{1}{2} \, \frac{\partial^2  v }{\partial x^2} \dt
- \sqrt{\rho}\, \frac{\partial v}{\partial x} \dM_t,
\end{equation}
where $\mu$ and $0\le \rho< 1$ are real-valued parameters.
It is a classical result from \cite{krylov81} that for a class of parabolic SPDEs including
(\ref{spde}), with initial data in $H^1$, there is a unique weak solution $v \in L_2(\Omega\times (0,T),\mathcal{F}, H^1(\mathbb{R}))$.
In fact, for the special form (\ref{spde}) on $\mathbb{R}$, i.e.\ without boundaries, it is easy to see that
a solution is given by
\[
v(t,x) = u(t,x-\mu t-\sqrt{\rho} M_t),
\]
where $u$ is the solution to the heat equation
\[
\frac{\partial u}{\partial t} = \fracs{1}{2} (1-\rho) \frac{\partial^2  u }{\partial x^2}
\]
with the same initial data as (\ref{spde}). We will use this semi-analytical solution to measure the errors of numerical approximations.

We list two applications of this equation.
\cite{kx99} show that (\ref{spde}) describes the limit empirical measure of a large particle system, where each individual
particle is governed by
\begin{equation}
\label{iproc}
\dX_t^i = \mu \dt + \sqrt{1-\rho} \dW_t^i + \sqrt{\rho} \dM_t,
\end{equation}
where $W^i$ are standard Brownian motions, which are independent mutually and of the Brownian motion $M$.
The parameter $\rho$ describes the correlation between each pair of $X^i$, which
explains the motivation for choosing $0\le \rho< 1$ in (\ref{spde}).
We will see later that $\rho=1$ is a boundary case in the stability analysis. It is also clear how equations with
non-normalised constant coefficients can be rescaled to (\ref{spde}).

Equation (\ref{spde}) also arises as the Zakai equation in a stochastic filtering problem (see, e.g.\ \cite{crisan}),
 where the solution is the distribution of a signal $X$, based on noisy observation of $M$.

\cite{gr11} introduce a Milstein finite difference approximation of (\ref{spde}) and study the complexity of multi-level Monte Carlo simulation.
In this article, we extend the discretisation and its analysis to an implicit method on the basis of the $\sigma$-$\theta$ time-stepping scheme proposed
and analysed by \cite{szpruch, buckwar} for SDEs, where the drift and the deterministic part of the double stochastic integral are taken (partly) implicit.
By combining Fourier methods in \cite{buckwar} and \cite{gr11}, we obtain a stability condition on the ratio $k/h^2$, under which the approximations
to the initial-value problem of (\ref{spde}) converge in mean-square sense in the spatial $L_2$- and $L_\infty$-norms, of first order in the time-step $k$ and second order in the spatial mesh size $h$.
A peculiarity of equation (\ref{spde}) is that the stability region of the chosen scheme is larger for explicit treatment of
the Milstein correction than for partly implicit treatment as explained above, and that an `anti'-implicit version with negative weight of the implicit term gives unconditional
stability.
We find, both from an asymptotic expansion of the error and numerical experiments, that the numerical error is dominated by the stochastic terms of the equation and therefore implicit or even Crank-Nicolson-type versions of the scheme have little effect on the achieved accuracy.
The ratio of $k/h^2$, which gives, empirically, optimal accuracy for constant mesh sizes, is close to the stability limit of the explicit scheme.
The improved stability can, however, be useful for locally refined schemes. 


The rest of the article is organised as follows.
In Section \ref{sec:finite-difference}, we define the implicit Milstein finite difference schemes and analyse their stability and accuracy
by Fourier techniques. Section \ref{sec:firsttests} presents numerical tests which confirm and illustrate these findings.
Section \ref{sec:results} gives an application to the pricing of basket credit derivatives, where the presence of an absorbing boundary
leads to local loss of regularity, and we show how mesh grading in conjunction with unconditionally stable implicit schemes can be used to improve the accuracy.
Section \ref{sec:conclusions} gives conclusions and outlines directions for future research.

\section{Discretisation and analysis of stability and convergence}
\label{sec:finite-difference}

Starting point is the integrated form of the SPDE (\ref{spde}), over a time interval $[t,t\!+\!k]$,
\[
v(t\!+\!k,x) = v(t,x) + \int_t^{t+k} \left( -\,  \mu \frac{\partial v}{\partial x}
                + \frac{1}{2}\, \frac{\partial^2  v }{\partial x^2}\right) \ds - 
          \int_t^{t+k} \!\!\sqrt{\rho}\ \frac{\partial v}{\partial x}\,  \dM_s.
\]

\subsection{Milstein finite differences}

In \cite{gr11}, a Milstein approximation to the stochastic integral is used, together with
standard central difference approximations 
on
a spatial grid with uniform spacing $h$,
to obtain an approximation $v_j^n$ to $v(nk,jh)$ defined by
\begin{eqnarray}
v_j^{n+1} &=& v_j^n\ -\ \frac{\mu\, k + \sqrt{\rho\, k}\, Z_n}{2h} \left(v_{j+1}^n - v_{j-1}^n\right) 
\nonumber \\&&~~~ +\ \frac{(1\!-\!\rho)\, k + \rho \, k\, Z_n^2}{2h^2} 
 \left(v_{j+1}^n - 2 v_j^n + v_{j-1}^n\right),
\label{discrete}
\end{eqnarray}
where $Z_n \sim N(0,1)$ are independent, for $n\ge 0$.

For a vector $V_n = (\ldots, v_{-1}^n,v_0^n,v_1^n,v_2^n,\ldots) \in \RR^\ZZ$, the system can then be written in operator form
\begin{eqnarray}
V_{n+1} &=& V_n\ -\ \frac{\mu\, k + \sqrt{\rho\, k}\, Z_n}{2h} D_1 V_n
+\ \frac{(1\!-\!\rho)\, k + \rho \, k\, Z_n^2}{2 h^2}  D_2 V_n,
\label{discrete-sys}
\end{eqnarray}
where $D_1$ and $D_2$ are first and second central difference operators.

\begin{remark}
\label{remark-mol}
The discretisation arises from a `horizontal' method of lines, where the time integral is approximated first, and then the spatial
derivatives are approximated by finite differences.
The `vertical' version where the Milstein scheme is applied to the system of SDEs resulting from a finite difference approximation
of the spatial derivatives, leads to 
\begin{eqnarray}
V_{n+1} = V_n - \frac{\mu\, k + \sqrt{\rho\, k}\, Z_n}{2h} D_1 V_n
+  \frac{k}{2 h^2}  D_2 V_n
+  \frac{\rho \, k\, (Z_n^2-1)}{2 h^2}  D_1^2 V_n.
\label{discrete-sys2}
\end{eqnarray}
The only difference is in the \Ito term, where the second difference is replaced by an iterated first difference.
We will sketch in Remark \ref{remark1} why the properties of the schemes are asymptotically identical, while the scheme proposed earlier has
implementational advantages as it leads to more compact finite difference stencils.
\end{remark}

%

\cite{gr11} derive the condition $(1+2 \rho^2) k/h^2 \le 1$ for mean-square stability of this explicit scheme. For $\rho=0$, this reduces to the
well known stability condition for the standard heat equation. The limitation on the timestep
is the motivation for
considering the following implicit versions.

In the spirit of \cite{kp92}, pp.~399, we define an \emph{implicit Milstein} 
finite difference scheme by
\begin{eqnarray}
\nonumber
V_{n+1} &=& V_n\ -\ \frac{\mu\, k}{2h} D_1 V_{n+1}
+\ \frac{k}{2 h^2}  D_2 V_{n+1}  \\
&& ~~~~ -\ \frac{\sqrt{\rho\, k}\, Z_n}{2h} D_1 V_n
+\ \frac{\rho \, k\, (Z_n^2-1)}{2 h^2}  D_2 V_n.
\label{discretesemiimpl}
\end{eqnarray}
All drift terms are taken implicit, while the volatility terms are taken explicit.
We also define a $\theta$-scheme
\begin{eqnarray}
\nonumber
V_{n+1} &=& V_n\ -\ \theta \left(\frac{\mu\, k}{2h} D_1 - \frac{k}{2 h^2}  D_2\right) V_{n+1}
- (1-\theta) \left(\frac{\mu\, k}{2h} D_1 - \frac{k}{2 h^2}  D_2\right) V_{n}
 \\
&& ~~~~ -\ \frac{\sqrt{\rho\, k}\, Z_n}{2h} D_1 V_n
+\ \frac{\rho \, k\, (Z_n^2-1)}{2 h^2}  D_2 V_n,
\label{discretetheta}
\end{eqnarray}
for $\theta\in [0,1]$.
Clearly, for $\theta=0$ one recovers the explicit scheme, for $\theta=1$ the implicit scheme.

It is pointed out in \cite{higham2} that
the stability region of drift-implicit Milstein schemes is often lower than their Euler-Maruyama counterpart.
\cite{szpruch} and \cite{buckwar} suggest a  $\sigma$-$\theta$-scheme,
which translates into the present SPDE setting as
\begin{eqnarray}
\nonumber
V_{n+1} &=& V_n\ -\ 
\theta \left(\frac{\mu\, k}{2h} D_1 - \frac{k}{2 h^2}  D_2\right) V_{n+1}
- (1-\theta) \left(\frac{\mu\, k}{2h} D_1 - \frac{k}{2 h^2}  D_2\right) V_{n}
 \\
 \nonumber
&& ~~~~ -\ \sigma \frac{\rho \, k}{2 h^2}  D_2 V_{n+1} - (1-\sigma) \frac{\rho \, k}{2 h^2}  D_2 V_n 
 \\
&& ~~~~ -\ \frac{\sqrt{\rho\, k}\, Z_n}{2h} D_1 V_n
+\ \frac{\rho \, k\, Z_n^2}{2 h^2}  D_2 V_n,
\label{discreteimpl}
\end{eqnarray}
where the deterministic part of the double \Ito integral is also taken partly implicit with $\sigma\in [0,1]$.
Note that all terms that can be taken implicit, consistent with the \Ito integral, are taken implicit.
Implicitness of terms involving $M$ changes the character of the integral, e.g.\ in the Stratonovic sense
for a trapezium rule approximation.

We now analyse accuracy and stability of the above schemes. The analysis is done on the real line (infinite grid),
for analytical tractability,
although in practical
applications truncation to a finite domain and approximation on a finite grid will be necessary.
We outline this at the start of Section \ref{sec:firsttests} and discuss the boundary behaviour in Section \ref{sec:results}.

\subsection{Mean-square stability analysis of Fourier modes}
\label{subsec:stab}

We assume for simplicity $\mu=0$ in the following, but the results are unaltered in the case $\mu\neq 0$, as we will discuss briefly 
in Remark \ref{remark2}.

As per classical finite difference analysis, e.g.\ \cite{rm67,mm05},
we study simple Fourier mode solutions of the form
\begin{equation}
\label{fouriermode}
V_j^n = X_n \exp(i j \phi), \quad |\phi| \leq \pi.
\end{equation}
We use superposition of these solutions for different $\phi$ to construct the leading order error terms
in the next section, and for now focus on the stability of individual modes.

Following \cite{higham, sm96},
we say that the scheme is mean-square stable, if for all $\phi \neq 0$, for the amplitude $X_n$ of the corresponding Fourier mode,
\begin{equation}
\label{ms-stab}
\EE\left[\, |X_n|^2 \right] \rightarrow 0 \quad {\rm for } \;\, n\rightarrow \infty.
\end{equation}

\begin{theorem}
Assume $\rho \in [0,1)$.
The $\theta$-$\sigma$ Milstein central difference scheme (\ref{discrete-sys}) is stable in the mean-square sense (\ref{ms-stab}) for Fourier modes
(\ref{fouriermode}),
provided
\begin{eqnarray}
\frac{k}{h^2} f(\rho;\theta,\sigma) :=
\frac{k}{h^2} \left[1 - 2 (\theta-\rho\sigma-\rho^2) \right]&<& 1.
\label{stab-lim}
\label{theta-sig-stab}
\end{eqnarray}
\end{theorem}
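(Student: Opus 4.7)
The plan is to insert a Fourier mode $V_j^n = X_n \exp(ij\phi)$ into the $\theta$-$\sigma$ scheme (\ref{discreteimpl}) (taking $\mu=0$), exploit the fact that the central difference operators are diagonalised in the Fourier basis, namely $D_1 \mapsto 2i\sin\phi$ and $D_2 \mapsto -4\sin^2(\phi/2)$, and derive a one-step mean-square recursion for $|X_n|^2$. Introducing the shorthand $\alpha = k/h^2$, $s = \sin^2(\phi/2)\in(0,1]$, $p = 2\alpha s \ge 0$, $a = \theta - \sigma\rho$, $b = (1-\theta) - (1-\sigma)\rho$, and $\tilde r = -(\sqrt{\rho k}/h)\sin\phi$, the scheme collapses to
\[
(1 + ap)\, X_{n+1} \;=\; \bigl(1 - bp + i\tilde r Z_n - \rho p Z_n^2\bigr) X_n,
\]
in which the real and imaginary parts of the bracket are cleanly separated and $a+b = 1-\rho$.

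Next, using independence of $X_n$ and $Z_n$ with $\EE[Z_n^2]=1$ and $\EE[Z_n^4]=3$, taking modulus squared and expectations yields
\[
\EE\bigl[|X_{n+1}|^2\bigr] \;=\; R(\phi)\, \EE\bigl[|X_n|^2\bigr], \qquad R(\phi) = \frac{(1-bp)^2 - 2\rho p(1-bp) + 3\rho^2 p^2 + \tilde r^2}{(1+ap)^2}.
\]
The core algebraic step is to show that the numerator minus denominator factorises as
\[
N(\phi) \;=\; 4\alpha s\, \bigl(\alpha s\, C - (1 - \rho + \rho s)\bigr), \qquad C := 1 - 2(\theta - \sigma\rho - \rho^2),
\]
by using $(1-bp)^2 - (1+ap)^2 = -(a+b)p\bigl(2 + (a-b)p\bigr) = -(1-\rho)p\bigl(2 + (a-b)p\bigr)$, substituting $\tilde r^2 = 2\rho p(1-s)$ which follows from $\sin^2\phi = 4s(1-s)$, cancelling the linear-in-$p$ contributions via $a+b=1-\rho$, and checking that the $p^2$-coefficient $-(1-\rho)(a-b) + 2\rho b + 3\rho^2$ collapses to exactly $C$. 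Mean-square stability (\ref{ms-stab}) of the mode is equivalent to $N(\phi)<0$ for every $\phi\neq 0$.

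Once $N$ is in this factored form, the remainder is straightforward: since $4\alpha s > 0$, stability becomes $\alpha s C < 1 - \rho + \rho s$ for every $s\in(0,1]$. If $C \le 0$ this holds automatically, corresponding to the unconditionally stable anti-implicit regime highlighted in the introduction. Otherwise we rearrange to $\alpha < (1-\rho)/(sC) + \rho/C$, and \emph{here the hypothesis $\rho < 1$ is used critically}: the right-hand side is strictly decreasing in $s$ because $(1-\rho)/s$ is, so the binding constraint is the mode $s=1$ (equivalently $\phi=\pi$), yielding $\alpha C < 1$, which is exactly (\ref{theta-sig-stab}). The main obstacle is the clean symbolic reduction of $N(\phi)$ to the factored form; I would keep the bookkeeping organised via the auxiliary variables $a,b,p,s$ introduced above, reverting to the scheme parameters $\theta,\sigma,\rho$ only at the very end when identifying $C$.
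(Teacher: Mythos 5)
Your proposal is correct and follows essentially the same route as the paper: insert the Fourier mode into (\ref{discreteimpl}) with $\mu=0$, diagonalise $D_1,D_2$, take modulus squared and expectations using $\EE[Z_n^2]=1$, $\EE[Z_n^4]=3$ to get a one-step amplification factor, and identify $\phi=\pi$ (i.e.\ $s=1$) as the binding mode via $\rho<1$. The only differences are cosmetic bookkeeping (your $a,b,p,s$ versus the paper's $a,c$, and your explicit factorisation of $N(\phi)$ versus the paper's rearranged inequality $1+ka(\frac{1}{2}-\theta+\rho\sigma+\rho^2)>-c^2/(2a)$), and your algebra, which I checked, reduces to exactly the paper's condition.
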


\begin{proof}
Insertion of (\ref{fouriermode}) in (\ref{discreteimpl}) leads to the equation
\begin{eqnarray}
\label{four-scheme}
X_{n+1} &=& X_n + k a \left( \theta X_{n+1} + (1-\theta) X_n\right) 
- \sqrt{k} i c Z_n X_n \\
&& \quad \;\; + \,k \rho a \, Z_n^2 \, X_n - k \rho a \left(\sigma X_{n+1} + (1-\sigma) X_n\right),
\end{eqnarray}
where
\begin{eqnarray}
\label{a}
a &=& - \frac{2}{h^2} \sin^2 \frac{\phi}{2}, \\
c &=& \frac{\sqrt{\rho}}{h} \sin\phi.
\label{c}
\end{eqnarray}
Rearranging, taking moduli and expectations gives
\begin{eqnarray}
\label{meansq}
\left(1\!-\!k a (\theta\!-\!\rho\sigma) \right)^2 \EE[\, |X_{n+1}|^2 ] 
\!=\! \left((1\!+\!ka(1\!-\!\theta\!+\!\rho\sigma))^2 \!+\! k c^2 \!+\! 2 k^2 \rho^2 a^2 \right) \EE\left[\, |X_n|^2 \right].
\end{eqnarray}

Simple calculations show that the scheme is stable in the above mean-square sense 
if and only if
\[
1+ k a \left(\frac{1}{2} - \theta + \rho\sigma + \rho^2 \right) > - \frac{c^2}{2 a}.
\]
Re-inserting $a$ and $c$ shows that this is equivalent to
\[
\frac{k}{h^2} \left[1-2 (\theta - \rho\sigma-\rho^2) \right] \sin^2 \frac{\phi}{2} + \rho \cos^2 \frac{\phi}{2} < 1, \quad \forall \phi,
\]
leading to the result.
\end{proof}

\begin{remark}
\label{remark1}
These results are similar but not identical to those obtained for scalar complex-valued test equations in \cite{buckwar},
because the discretisation (\ref{discreteimpl}) differs from the standard Milstein scheme for systems of SDEs, as per Remark \ref{remark-mol}.
The stability conditions only differ by terms which vanish as $k,h\rightarrow 0$ with $k/h^2$ fixed, and are hence asymptotically equivalent. 
This was to be expected given the schemes differ only in the stencil width for the discretisation of the second derivative in the \Ito term.
We do not reproduce the analysis of the other scheme here.
\end{remark}

\begin{remark}
\label{remark2}
Similarly, the inclusion of a first order term in the drift, $\mu\neq 0$, leads to lower order corrections in $k$, $k/h^2$ fixed,
and therefore does not change the result asymptotically.
\end{remark}

The scheme is unconditionally mean-square stable, i.e., without conditions on $k$ and $h$, if $f\le 0$.
For all other cases the scheme is mean-square stable if $k/h^2 < 1/f$.
This upper bound $1/f$ is shown as a function of $\rho$
in Figure \ref{fig:stablim}, for the explicit, implicit and double implicit schemes.

The stability condition (\ref{theta-sig-stab}) is stricter for $\sigma>0$ than for $\sigma=0$, i.e.\ scheme
(\ref{discretesemiimpl}).
Specifically, the doubly implicit Milstein scheme (\ref{discreteimpl}) with $\theta=\sigma=1$, is unconditionally stable in the mean-square sense
only if
$
\rho \le 1/(1+\sqrt{3}),
$
whereas the drift-implicit scheme (i.e., $\sigma=0$, $\theta=1$) is unconditionally stable for $\rho\le 1/\sqrt{2}$.

This arises from the fact that the implict discretisation of the \Ito term on the second line of (\ref{discreteimpl}), 
containing $D_2 V_{n+1}$,
has the opposite sign of the implicit $D_2 V_{n+1}$ term on the first line, which arises from the discretisation
of the $u_{xx}$ term in the SPDE (\ref{spde}). The latter determines the parabolic nature of the problem.
Hence, increasing $\sigma$ reduces this component in the implicit term while it increases it in the explicit term, making
the scheme less contractive in the mean-square sense for all non-zero wave numbers, as is eventually seen from (\ref{meansq}).
Conversely, taking $\sigma<0$ improves the stability, and for $\sigma=-1$, $\theta\ge 1/2$,
the scheme is unconditionally stable for all $0\le \rho< 1$. 
This somewhat surprising feature arises due to the purely imaginary eigenvalues of the first order operator in the Brownian driver.


\begin{figure}[ht]
\begin{center}
\psfrag{rho}{{\small$\rho$}}
\psfrag{limit}{{\small $1/f(\rho;\theta,\sigma)$}}
\psfrag{explicit}{{\scriptsize $\theta=\sigma=0$}}
\psfrag{double implicit}{{\scriptsize $\theta=\sigma=1$}}
\psfrag{implicit}{{\scriptsize $\theta\!=\!1,\sigma\!=\!0$}}
\includegraphics[width=.7\textwidth]{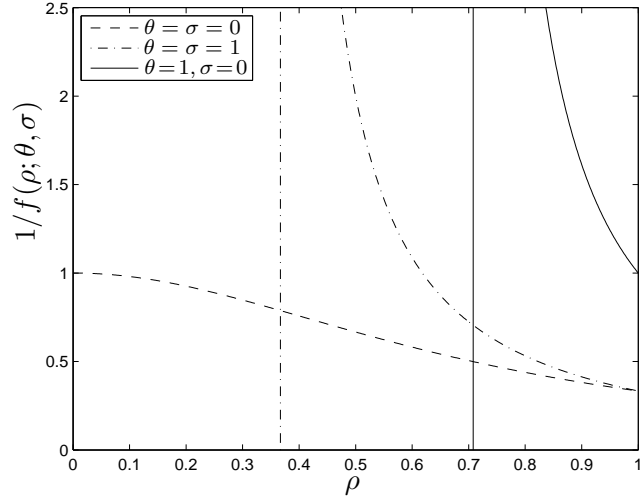}
\end{center}
\caption{
Shown are the stability regions for the 
explicit scheme ($\theta=\sigma=0$),  implicit ($\theta=1$, $\sigma=0$) and double implicit scheme ($\theta=\sigma=1$).
The implicit scheme is unconditionally stable for $\rho\le 1/\sqrt{2} \approx 0.7$,
the double implict scheme for $\rho\le 1/(1+\sqrt{3}) \approx 0.36$, 
marked by vertical lines.
In all other cases, the curve of the style defined in the legend gives the
upper limit of the stability range of $k/h^2$, i.e., $1/f(\rho;\theta,\sigma)$ with $f$ from 
(\ref{theta-sig-stab}), as a function of $\rho$.}
\label{fig:stablim}
\end{figure}

In the case $\theta=1/2$, specifically, the time discretisation of the PDE part is of second order accurate and the error is thus dominated by the Milstein discretisation of the stochastic integral.

\subsection{Fourier analysis of mean-square convergence}

We can also derive the leading order error terms 
exploiting the availability of a closed-form solution.
This is different from the approach in \cite{lang10} who shows a Lax-equivalence theorem for a different class of SPDEs to deduce convergence
from stability and stochastic consistency.

\begin{theorem}
\label{theo:errterm}
Assume $\rho \in [0,1)$, $T>0$, $k=T/N$ and $\frac{k}{h^2}=\lambda>0$ is kept fixed such that (\ref{theta-sig-stab}) holds.
The $\theta$-$\sigma$ Milstein central difference scheme (\ref{discrete-sys}) 
has the error expansion, for Dirac initial data,
\begin{eqnarray}
V_j^N-v(T,j h) = k \, E(T,j h) + o(k) \, R(T,j h),
\end{eqnarray}
where $E$ and $R$ are random variables with bounded moments. 
\end{theorem}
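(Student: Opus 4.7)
The plan is to reduce the pointwise error analysis to a comparison of one-step Fourier amplification factors, exploiting the fact that both the PDE part and the scheme are linear and translation invariant on $\RR$. For Dirac initial data the discrete solution at grid points admits the representation
\[
V_j^N \;=\; \frac{1}{2\pi h}\int_{-\pi}^{\pi} e^{ij\phi}\prod_{n=0}^{N-1} G(k,\phi,Z_n)\,d\phi,
\]
where $G$ is the one-step amplification factor extracted in the proof of the stability theorem from (\ref{four-scheme}). By the closed form $v(T,x) = u(T, x-\sqrt{\rho}M_T)$ and the Fourier inversion of the heat kernel, the exact solution admits the parallel representation with amplification $\widetilde G(k,\xi,Z) = \exp(i\xi\sqrt{\rho k}\,Z - \xi^2(1-\rho)k/2)$, $\xi = \phi/h$, driven by the same standardised Brownian increments $Z_n = (M_{(n+1)k}-M_{nk})/\sqrt{k}$.

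Next I would perform a local error expansion at fixed $\phi$, using $a = -\xi^2/2 + O(h^2)$, $c = \sqrt{\rho}\,\xi + O(h^2)$, and a geometric expansion of the implicit factor $(1-ka(\theta-\rho\sigma))^{-1}$. The Milstein correction $\rho k a Z^2$ in $G$ is precisely what matches the $-\rho k\xi^2 Z^2/2$ produced by $\exp(i\xi\sqrt{\rho k}Z)$ in $\widetilde G$, so after cancellation one obtains
\[
G(k,\phi,Z_n) - \widetilde G(k,\phi/h,Z_n) \;=\; k^{3/2}\psi(\phi,Z_n) \;+\; k^2 \chi_{\theta,\sigma}(\phi,Z_n) \;+\; O(k^{5/2}),
\]
with $\psi,\chi_{\theta,\sigma}$ polynomials in $Z_n$ whose coefficients depend smoothly and polynomially on $\phi$; the scheme parameters $\theta,\sigma$ first enter at order $k^2$, consistent with Remark \ref{remark1}. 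Telescoping the product then yields
\[
\prod_n G_n - \prod_n \widetilde G_n \;=\; \sum_{n=0}^{N-1}\Bigl(\prod_{m<n}\widetilde G_m\Bigr)\bigl(G_n-\widetilde G_n\bigr)\Bigl(\prod_{m>n} G_m\Bigr).
\]
Under the stability hypothesis (\ref{theta-sig-stab}), the two partial products are uniformly $L^2(\Omega)$-bounded, and independence of the $Z_n$ together with a Cauchy--Schwarz / \Ito-isometry-type argument turns the $N=T/k$ local errors of size $k^{3/2}$ into a global mean-square error of order $k$. Inverse Fourier transformation at $x=jh$ then yields $V_j^N - v(T,jh) = k E(T,jh) + o(k) R(T,jh)$ once the $\sum_n k^{3/2}\psi(\phi,Z_n)(\cdots)$ is identified as a Riemann-type approximation of a stochastic integral against the exact semigroup, converging in $L^2(\Omega)$ to an explicit random variable $E$ with all moments bounded; moments of $R$ are bounded through the same stability estimates.

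The main obstacle will be controlling the $\phi$-integration in the Dirac case, since $\widehat{\delta}\equiv 1$ supplies no a priori high-frequency decay. The argument has to rely on the Gaussian damping $\exp(-\xi^2(1-\rho)T/2)$ inside $\prod_n\widetilde G_n$, together with the mean-square contractivity of $\prod_n G_n$ from (\ref{theta-sig-stab}), to dominate the polynomial $\phi$-growth of $\psi$ and $\chi_{\theta,\sigma}$ uniformly in $k$. A secondary technical point, separate from the basic $O(k)$ bound, is verifying that the leading $k^{3/2}$ contributions actually converge in $L^2(\Omega)$ to a well-defined limit $E$ rather than oscillating, which is an invariance-principle-type statement about the polynomial Gaussian functionals $\psi(\phi,Z_n)$ and is handled by an \Ito-type approximation argument together with the smoothness of the symbols $a,c$ in $\phi$.
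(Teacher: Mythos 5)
Your proposal follows the same overall architecture as the paper's proof: Fourier mode decomposition on the infinite grid, comparison of the numerical one-step amplification factor from (\ref{four-scheme}) with the exact factor $\exp(-\fracs{1}{2}(1-\rho)\kappa^2 k - i\kappa\sqrt{\rho k}Z_n)$ driven by the \emph{same} increments $Z_n$, a split into low and high wavenumber regimes, and a Carter--Giles-style Fourier inversion to handle the Dirac data via the Gaussian decay of the exact symbol and the mean-square contractivity of the scheme. Where you genuinely diverge is in the aggregation over time steps. You form the \emph{additive} difference $G_n-\widetilde G_n = O(k^{3/2})$ and telescope the product, which then requires a discrete-Gronwall/martingale-orthogonality argument to recover the order $k$ from $N$ local errors of size $k^{3/2}$, plus a separate invariance-principle step to identify the limit $E$. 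The paper instead works \emph{multiplicatively}: it writes numerator and denominator of the one-step factor as exponentials, obtaining $X_{n+1}=X_n\exp(-\fracs{1}{2}(1-\rho)\kappa^2 k - i\kappa\sqrt{\rho k}Z_n + e_n)$ with an explicit per-step exponent error $e_n$, so that $X_N = X(T)\exp(S_N)$ with $S_N=\sum_n e_n = k\kappa^4\mu T + ik\kappa^3\sigma W_T + o(k\kappa^4)$. This makes both the order-$k$ bound and the explicit form of the leading error term $E$ fall out of a single sum of i.i.d.\ polynomial Gaussian functionals, avoiding the cross-term bookkeeping in your telescoping sum; your route buys nothing extra here, but it is the more robust template when no closed-form exact amplification factor is available. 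Both arguments land on the same two technical obstacles you correctly identify (high-frequency control for Dirac data, convergence of the fluctuating $k^{3/2}$ contributions), and both resolve them the same way.

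One factual slip: it is not true that $\theta$ and $\sigma$ first enter at order $k^2$. In the paper's expansion the $O(x\sqrt{x})=O(k^{3/2}\kappa^3)$ term of $e_n^{(0)}$ contains $Y_n=\fracs{1}{2}(A+\rho Z_n^2)$ with $A=(1-\theta)-\rho(1-\sigma)$, and the denominator contributes through $B=\theta-\rho\sigma$; equivalently, in your additive expansion the cross term between $-i\sqrt{\rho k}\,\kappa Z_n$ and the $O(k)$ part of the implicit factor $(1-ka(\theta-\rho\sigma))^{-1}$ already puts $\theta,\sigma$ into $\psi$ at order $k^{3/2}$. (Remark \ref{remark1} concerns the $D_2$ versus $D_1^2$ stencils, not the $\theta,\sigma$ dependence.) This does not affect the validity of the expansion --- the leading random error term $E$ simply depends on $\theta$ and $\sigma$ --- but the stated consistency check is misplaced.
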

\begin{corollary}
Under the conditions of Theorem \ref{theo:errterm}, the $\theta$-$\sigma$ Milstein scheme converges in the mean-square sense 
for the spatial $L_2$- and $L_\infty$-norms, and
\[
\sqrt{\EE[\|V^N - v(T,\cdot)\|^2]} = O(k).
\]
\end{corollary}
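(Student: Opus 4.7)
The plan is to lift the pointwise Dirac-initial-data expansion of Theorem \ref{theo:errterm} to global $L_2$ and $L_\infty$ mean-square bounds by summing (respectively taking suprema) over the spatial grid, exploiting the Gaussian decay of the error fields $E(T,\cdot), R(T,\cdot)$ inherited from the heat-kernel structure of $v(T,\cdot)$.

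For the $L_2$ estimate I would combine the pointwise bound with $(a+b)^2 \le 2a^2 + 2b^2$ to get
\[
\EE\left[\|V^N - v(T,\cdot)\|_{L_2}^2\right] = h\sum_{j\in\ZZ}\EE\left[|V_j^N - v(T,jh)|^2\right] \le 2 k^2\, h\sum_j \EE[|E(T,jh)|^2] + o(k^2)\, h\sum_j \EE[|R(T,jh)|^2],
\]
so the problem reduces to uniform-in-$h$ bounds on the discrete spatial $L_2$-norms of $E$ and $R$. Since the expansion of Theorem \ref{theo:errterm} arises by Taylor-expanding the per-mode amplification factor of the scheme around the continuous symbol, $E$ and $R$ admit grid Fourier representations whose symbols are polynomial in $\phi$ times the continuous heat symbol $\exp(-\frac{1}{2}(1-\rho)T\phi^2 - i\mu T\phi - i\sqrt{\rho}\phi M_T)$, which decays like a Gaussian in $\phi$ uniformly in $\omega$. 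Parseval's identity on the grid then yields the required uniform bound, with finite expectation because $M_T$ has Gaussian moments.

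For the $L_\infty$ estimate I would invoke a one-dimensional discrete Sobolev embedding $\|f\|_{L_\infty}^2 \le C(\|f\|_{L_2}^2 + \|D_1 f / h\|_{L_2}^2)$ and apply the same Parseval argument to the first spatial difference of the error, which merely inserts an additional factor $(e^{i\phi h}-1)/h = O(\phi)$ under the Fourier integral, absorbed by the Gaussian decay of the heat symbol. Combining this with the $L_2$ bound via Fubini and taking square roots yields $\sqrt{\EE[\|V^N - v(T,\cdot)\|_{L_\infty}^2]} = O(k)$.

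The main obstacle is establishing that the per-mode error expansion is uniform in $\phi\in[-\pi/h,\pi/h]$, in $h$, and across realisations of the driver. Uniformity in $\phi$ and $h$ is supplied by the mean-square stability estimate (\ref{meansq}) inside the stability region (\ref{theta-sig-stab}), which controls the amplification factor uniformly; stochastic uniformity follows because $M$ enters the continuous symbol only through a unit-modulus phase $e^{-i\sqrt{\rho}\phi M_T}$, while the polynomial prefactors involve only sums of $Z_n$ and $Z_n^2 - 1$ with well-behaved Gaussian moments. Once these uniform Fourier-side bounds are in place, Parseval and the embedding close both the $L_2$ and $L_\infty$ estimates.
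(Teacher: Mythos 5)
The paper states this corollary without a separate proof, treating it as an immediate consequence of Theorem \ref{theo:errterm} and of the Fourier representation constructed in that theorem's proof; your argument correctly supplies the omitted details in exactly that spirit. Lifting the pointwise expansion to the discrete $L_2$-norm via Parseval (using the Gaussian decay of the heat symbol, available since $\rho<1$) and to $L_\infty$ via a discrete Sobolev embedding (or, equivalently, by bounding the modulus of the inverse Fourier integral directly) is precisely what is needed, and your attention to uniformity in $\phi$, $h$ and $\omega$ mirrors the paper's split into the small and large wavenumber regimes.
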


\begin{proof}[of Theorem \ref{theo:errterm}]
By insertion one checks that
$
X(t) \exp( i \kappa x),
$
is a solution to (\ref{spde}) iff
\[
X(t) = X(0) \exp\left(-\fracs{1}{2} (1-\rho) \kappa^2 t - i \kappa \sqrt{\rho} M_t \right).
\]
This allows us to compare the numerical solution from (\ref{four-scheme}), 
\[
X_{n+1} =  X_n \ \frac{1 + k a (1-\theta) - \sqrt{k} i c Z_n - k \rho a ((1-\sigma)-Z_n^2)}{1 - k a \theta +  k \rho a \sigma},
\]
with $a$ and $c$ as in (\ref{a}) and (\ref{c}),
to the exact solution over a single timestep,
\[
X(t_{n+1}) = X(t_n) \ \exp\left( -\fracs{1}{2} (1-\rho) \kappa^2 k - i \kappa \sqrt{\rho\,k} Z_n \right),
\]
where $M_{t_{n+1}}- M_{t_n} = \sqrt{k}\, Z_n$.

We extend here the analysis in \cite{gr11} for the \emph{explicit} scheme, $\theta=\sigma=0$, 
both in scope and in detail.
In the following, we keep $k/h^2=\lambda$ fixed, and set $\phi = h \kappa$.

We first  consider the regime $\kappa \le h^{-m}$, where  $m<1/2$.
Then $x= k \kappa^2 = \lambda h^2 \kappa^2$ is small and one can derive by Taylor expansion
\begin{eqnarray*}
1 + k a A - \sqrt{k} i c Z_n + k \rho a Z_n^2 &=& 1 - x Y_n (1-\fracs{x}{12 \lambda}) - i \sqrt{x} \sqrt{\rho} Z_n (1- \fracs{x}{6 \lambda}) + o(x^2), \\
&=& \exp\left(-\fracs{1}{2} A k \kappa^2 - i \sqrt{\rho k} \kappa Z_n + e_n^{(0)} \right),
\end{eqnarray*}
where $A$ is a fixed constant, $Y_n = \fracs{1}{2} (A+\rho Z_n^2)$ and
\begin{eqnarray*}
e_n^{(0)} =
- i x \sqrt{x \rho} \left[(Y_n -\fracs{1}{6\lambda}) Z_n - \fracs{\rho}{3} Z_n^3\right] +
x^2 \left[(Y_n -\fracs{1}{6\lambda})(\rho Z_n^2-\fracs{1}{2} Y_n) - \fracs{\rho^2}{4} Z_n^4\right] + o(x^2).
\end{eqnarray*}
Similarly, for fixed $B$, 
\begin{eqnarray*}
1 - k a B &=& \exp\left(\fracs{1}{2} B x - e_n^{(1)} \right),\\
e_n^{(1)} &=& \fracs{B}{24 \lambda} x^2 + \fracs{B^2}{8} x^2 + o(x^2).
\end{eqnarray*}
The remainder term $o(x^2) = o(k^2 \kappa^4) = o(k h^2 \kappa^4)$ is understood to be 
a deterministic constant of order $o(x^2)$ multiplied by a
random variable whose moments are all bounded independent of $k$ and $h$.

Taking $A=(1-\theta)-\rho(1-\sigma)$ and $B=\theta-\rho\sigma$, we have $A+B=1-\rho$ and
\[
X_{n+1} =  X_n \ \exp\left( -\fracs{1}{2} (1-\rho) \kappa^2 k - i \kappa \sqrt{\rho\,k} Z_n + e_n \right),
\]
where $e_n = e_n^{(0)} + e_n^{(1)}$. Aggregating over $N$ time steps, at $t_N = k N =T$,
\begin{eqnarray}
X_N &=& X(t_N) \ \exp\left(S_N \right), \\
S_N &=& \sum_{n=0}^{N-1} e_n
\,=\,
k \kappa^4 \mu(\rho,\lambda,A) T \,+\,
i k \kappa^3 \sigma(\rho,\lambda,A) W_T \,+\,
 o(k \kappa^4),
 \label{error-exp-kappa}
\end{eqnarray}
where $W_T \sim N(0,T)$ and $\mu$ and $\sigma$ are functions of the parameters 
determined by $e_n^{(0)}$ and $e_n^{(1)}$ above, and
which are bounded for fixed $\lambda>0$.

%

For the large wavenumber regime $\kappa > h^{-m}$, for any $m>0$, 
a similar calculation to the one in Section \ref{subsec:stab} shows that,
under the mean-square stability condition (\ref{theta-sig-stab}), 
\[
X_N = o(k^p) \quad \forall \, p>0.
\]
Following the analysis in \cite{carter} for the deterministic case ($\rho = 0$ in the present setting),
using discrete and continuous Fourier pairs,
\begin{eqnarray*}
V_j^n &=& \fracs{1}{2\pi} \int_{-\pi/h}^{\pi/h} X_n(\kappa) \exp(i\kappa h j) \ {\rm d}\kappa, \\
v(t_n, j h) &=& \fracs{1}{2\pi} \int_{-\infty}^{\infty} X(t_n,\kappa) \exp(i\kappa h j) \ {\rm d}\kappa,
\end{eqnarray*}
we can use the decay of $X_n$ and $X$ for large $\kappa$ to deduce
\begin{eqnarray*}
V_j^N - v(T,jh) &=& 
\fracs{1}{2\pi} \int_{-\pi/h}^{\pi/h} (X_n(\kappa)-X(T,\kappa)) \exp(i\kappa h j) \ {\rm d}\kappa + o(k) \\
&=&
\fracs{1}{2\pi} \int_{-h^{-m}}^{h^{-m}} X(T,\kappa) \left(\exp(S_N)-1\right) \exp(i\kappa h j) \ {\rm d}\kappa + 
o(k),
\end{eqnarray*}
and together with (\ref{error-exp-kappa}) we obtain the result upon expanding $\exp$ and integrating.
\end{proof}

%
%

\section{Convergence tests}
\label{sec:firsttests}

We now test the accuracy and stability of the scheme numerically.

The computations were conducted with the following set of  parameters for (\ref{spde}),
taken from \cite{bush}:
$\rho = 0.2$, $\mu=0.081$.

As initial data, we use $v(0,x)\!=\!\delta(x\!-\!x_0)$ with $x_0 = 5$.
In this case, the exact solution to the SPDE can be seen to be
\begin{equation}
v(T,x) = \frac{1}{\sqrt{2\pi \, (1\!-\!\rho)\,T}}\ 
\exp\left( -\ \frac{(\rule{0in}{0.16in}x - x_0 - \mu\, T - \sqrt{\rho} \, M_T)^2}{2\,(1\!-\!\rho)\,T} \right).
\label{eq:gaussian}
\end{equation}

For the computations, we localise the range of $x$ values to [-16/3,16] and set homogeneous Dirichlet boundary conditions.
This has been seen to introduce negligible localisation error numerically for the above parameters.

The operators $D_1$ and $D_2$ in (\ref{discreteimpl}) now have to be interpreted as finite difference matrices including the boundary conditions
for the first and last element.
In every timestep, the scheme requires the solution of a tridiagonal linear system similar to the one for the heat equation,
and therefore has the same computational complexity (linear in the number of grid points) as the explicit scheme.



The analytical solution $u$ allows us to approximate the mean-square $L_2$-error
using $J$ mesh intervals and $N$ timesteps by
\begin{eqnarray}
\nonumber
E(h,k)^2 &=& \EE\left[\sum_{j=0}^J (v_j^N(\omega)-v(N k,j h;\omega))^2 \,  h\right] \\
&\approx& \frac{1}{M} \sum_{m=1}^M
\sum_{j=0}^J (v_j^N(\omega_m)-v(N k,j h;\omega_m))^2 \, h,
\label{errex}
\end{eqnarray}
where the expectation over Brownian paths $\omega$ is approximated by the average over
$M$ samples $\omega_m$.

Anticipating applications where the exact solution is unknown, we also define error measures based on a  fine grid 
solution $f$ with mesh parameters $k$ and $h$, 
and a coarse solution $c$ with mesh parameters $4 k$ and $2 h$,
\begin{eqnarray}
\nonumber
e(h,k)^2 &=& \EE\left[\sum_{j=0}^{J/2} (f_{2j}^{N}(\omega)-c_j^{N/4}(\omega))^2 \, h\right] \\
&\approx& \frac{1}{M} \sum_{m=1}^M
\sum_{j=0}^{J/2} (f_{2j}^N(\omega_m)-c_j^{N/4}(\omega_m))^2 \, h.
\label{errest}
\end{eqnarray}

Iterating the refinement, we get a sequence of decreasing grid sizes 
$h_l = h_0\, 2^{-l}$ and timesteps
$k_l = k_0\, 4^{-l}$, and denote $E_l = E(h_l,k_l)$ the mean-square $L_2$-error
at level $l\ge 0$ and $e_l = e(h_l,k_l)$.
In the following example, $h_0=4/3$, $k_0=1/4$.
The refinement factors are determined by the stability constraint of the explicit scheme for $k/h^2$ and
the $O(k,h^2)$ convergence order.

Note that $x_0$ does not coincide with a grid point. We 
apply the Dirac initial data to a basis of hat functions to
retain second order convergence, see \cite{pooley1}.

Fig.~\ref{fig:unbddplots} shows the computed values of $E_l^2$ and $e_l^2$ for
the explicit scheme, $\theta=\sigma=0$, the drift implicit scheme $\theta=1$, $\sigma=0$,
and the `Milstein-anti-implicit' Crank-Nicolson scheme, $\theta=0.5$, $\sigma=-1$.
The choice of $h_l$ and $k_l$ is within the stability region of all schemes.
\begin{figure}
 \psfrag{explest}[l][l][0.4]{$e_l^2$, expl.}
 \psfrag{implest}[l][l][0.4]{$e_l^2$, impl.}
 \psfrag{cnest}[l][l][0.4]{$e_l^2$, C.N.}
 \psfrag{explex}[l][l][0.4]{$E_l^2$, expl.}
 \psfrag{implex}[l][l][0.4]{$E_l^2$, impl.}
 \psfrag{cnex}[l][l][0.4]{$E_l^2$, C.N.}
\psfrag{variance}[l][l][0.7]{$\log_2 e_l^2$, $\log_2 E_l^2$}
\psfrag{level}[l][l][0.7]{$l$}
\begin{center}
\includegraphics[width=.7\textwidth]{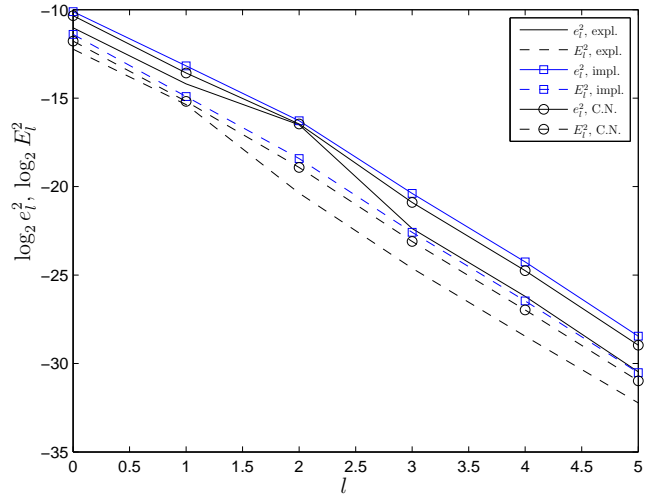}
\end{center}
\caption{
Mean-square error measures $E_l$ and $e_l$ (as explained in the text) for the explicit (`expl.', $\theta=0$, $\sigma=0$), drift implicit (`impl.', $\theta=1$, $\sigma=0$) and Crank-Nicolson-type (`C.N.', $\theta=0.5$, $\sigma=-1$) Milstein schemes.
}
\label{fig:unbddplots}
\end{figure}
The results confirm the theoretical $O(k,h^2)$ convergence, and show that the errors are very similar indeed for all schemes.



\section{An application with locally refined meshes}
\label{sec:results}

\subsection{An initial-boundary value problem}

In this section, we consider the initial-boundary value problem on the positive half-line,
\begin{eqnarray}
\label{spde1}
\dv &=& -\mu\, \frac{\partial v}{\partial x} \dt 
+ \frac{1}{2} \, \frac{\partial^2  v }{\partial x^2} \dt
- \sqrt{\rho}\, \frac{\partial v}{\partial x} \dM_t, \\
v(0,\cdot) &=& \delta(\cdot-x_0), 
\label{ic1}
\\
v(\cdot,0) &=& 0,
\label{bc1}
\end{eqnarray}
i.e., with an absorbing boundary condition at 0.

We use the same data as in Section \ref{sec:firsttests}, and for the numerical tests solve on $[0,16]$ to approximate the positive half-line.
The value of 16 was chosen large enough that truncation experimentally had negligible impact on the results. To illustrate, 
for $T=5$, the standard deviation of each $X_t^i$ in (\ref{iproc}) is $\sqrt{5} \approx 2.2$, so 16 is approximately 5 standard deviations away
from their starting point $X_0^i=5$. In contrast, the absorbing boundary at 0 is just over 2 standard deviations away, which suggests the
fraction of absorbed particles (lost mass of $v$) should be in the order of magnitude of $5\%$.

\begin{figure}
 \psfrag{explest}[l][l][0.4]{expl., unb.}
 \psfrag{implest}[l][l][0.4]{impl., unb.}
 \psfrag{cnest}[l][l][0.4]{C.N., unb.}
 \psfrag{explest2}[l][l][0.4]{expl., bdd}
 \psfrag{implest2}[l][l][0.4]{impl., bdd}
 \psfrag{cnest2}[l][l][0.4]{C.N., bdd}
\psfrag{variance}[l][l][0.7]{$\log_2 e_l^2$, $\log_2 E_l^2$}
\psfrag{level}[l][l][0.7]{$l$}
\begin{center}
\includegraphics[width=.7\textwidth]{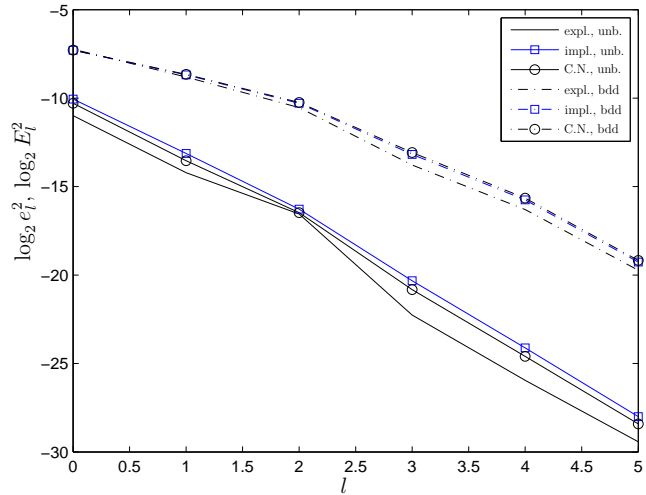}
\end{center}
\caption{
Mean-square error measure $e_l$ (as explained in the text)
for the explicit (`expl.', $\theta=0$, $\sigma=0$), drift implicit (`impl.', $\theta=1$, $\sigma=0$) and Crank-Nicolson-type (`C.N.', $\theta=0.5$, $\sigma=-1$) Milstein schemes,
for the bounded case in comparison with the unbounded case already seen in 
Fig.~\ref{fig:unbddplots}.
}
\label{fig:unbddandbddplots}
\end{figure}

Fig.~\ref{fig:unbddandbddplots} shows that although the estimated error is still asymptotically of the same order in this case,
it is substantially larger.
It is known from \cite{kl98} that the solution on the half-line is only in $H^1$ in space but does not have $L_2$ second derivative, however
$x u_{xx} \in L_2$.

\subsection{Local mesh refinement}

To remove the singularity of the computed solution at $x=0$,
one might introduce local coordinate stretching, 
i.e., a new coordinate $y$ and increasing one-to-one
function $f: [0,\infty)\rightarrow [0,\infty)$ with inverse $g$
such that
\[
v(t,x) = w(t,f(x)) \quad \Leftrightarrow \quad v(t,g(y)) = w(t,y), \quad \forall t,x,y\ge 0.
\]
The SPDE (\ref{spde}) in $y$-coordinates reads
\begin{equation}
\label{spde-trans}
\dw = \left(-\mu\, f' \!\! \circ \! g \, + \frac{1}{2} \, f'' \!\! \circ \! g \right) \, \frac{\partial w}{\partial y} \dt 
+ \frac{1}{2} \, (f' \! \circ g)^2 \, \frac{\partial^2  w }{\partial y^2} \dt
- \sqrt{\rho}\, f' \! \circ  g \, \frac{\partial w}{\partial y} \dM_t,
\end{equation}
and the Milstein finite difference schemes are defined accordingly.

Conversely, this is closely related to a discretisation of the original SPDE on a non-uniform mesh with nodes $x_n = g(n h)$.

A distinct choice of transformation is $y=\sqrt{x}$, because then
\[
\frac{\partial^2  w }{\partial y^2} = 4 x \frac{\partial^2  v}{\partial x^2} + 2 \frac{\partial v}{\partial x},
\]
and from \cite{krylov94} the right-hand-side is known to be square-integrable in $x$.
This does not imply, however, that $w \in H^2$ and does not lend itself easily to an improved numerical analysis.

We now investigate the numerical improvement in accuracy for a specific application.

\subsection{Application to credit derivatives}

\cite{bush} show how the equations (\ref{spde1}) to (\ref{bc1}) can be used to model credit baskets:
there, (\ref{spde1}) describes the evolution of a firm value distribution of a large basket of defaultable obligors,
where each firm value follows (\ref{iproc});
$x_0$ in (\ref{ic1}) is the firm value at the initial time;
(\ref{bc1}) models default of a firm when its value process crosses a default threshold at $x=0$.
The basket loss, i.e.\ the fraction of firms that have not survived, is then given by
\begin{equation}
L_t = 1-\int_0^\infty u(t,x) \dx.
\end{equation}

The loss model can be used as the basis for the valuation of basket credit derivatives,
which are structured by a sequence of regular fee payments by the protection buyer, in
return for a payment by the protection seller if a certain default event occurs.
A standardised such product is a collateralized debt obligation (CDO) where the payments
depend on the losses in a certain segment of the basket, measured by attachment points $a$
and detachment points $d$, over a certain time horizon $T$.
The outstanding tranche notional is then defined as
\begin{equation}
Z_t = \max(d-L_t,0) - \max(a-L_t,0).
\end{equation}
We will consider a maturity  $T=5$ and a single tranche $[a,d] = [0,0.03]$.

A survey of products and models can be found e.g.\ in \cite{schoenbucher},
a derivation of pricing formulae in the present model in \cite{bush}.

The main quantities that enter the formulae for tranche spreads are expected, discounted (with interest rate $r$, here 0.042)
spread payments,
so we will be considering here
\[
P = \sum_{i=1}^{n} \e^{-r T_{i}} \mathbb{E}[{Z}_{T_{i-1}} - {Z}_{T_{i}}],
\]
where $T_i = i q$, $q=0.25$ (quarterly payments), $n=20$. 

We now give results with and without coordinate transformation.
For the sake of completeness, we provide the approximated loss function in transformed coordinates
\[
L_t = 1-\int_0^\infty w(t,y) g'(y) \dy \approx 1 - h \sum_{j=1}^{J-1} w(t,y_j) g'(y_j),
\]
where we use the last expression as numerical approximation to the losses.

Let ${P}_{l}$ be an approximation to $P$ with mesh size $h_l = h_0 \, 2^{-l}$ and time step
$k_l = k_0 \, 4^{-l}$ for $l>0$ and $h_0=8/5$, $k_0=1/4$.
We use the $\theta$-$\sigma$ scheme with $\theta=0.5$ and $\sigma=-1$ for its unconditional stability.
We compute estimators $\widehat{Y}_l$ to $\EE[P_l-P_{l-1}]$ for $l>0$ in order to estimate the contributions
of individual refinement levels. We do this by averaging $P_l-P_{l-1}$ over $N_l$ sample paths $(M_t)_{t\in [0,T]}$ (identical for $P_l$ and $P_{l-1}$,
but independent for different $\widehat{Y}_l$).
We show $\EE[\widehat{Y}_l]$ and $V_l = N_l \VV[\widehat{Y}_l]$ in Fig.~\ref{fig:scalingplots},
where the number of samples, $N_l$, is chosen to make the simulation error negligible.
\begin{figure}
\begin{center}
\psfrag{alpha=1}[l][l][0.4]{$\alpha=1$}
\psfrag{alpha=1/2}[l][l][0.4]{$\alpha=0.5$}
\psfrag{level l}[l][l][0.4]{$l$}
\psfrag{log2 variance}[l][l][0.4]{$V_l$}
\includegraphics[width=.45\textwidth]{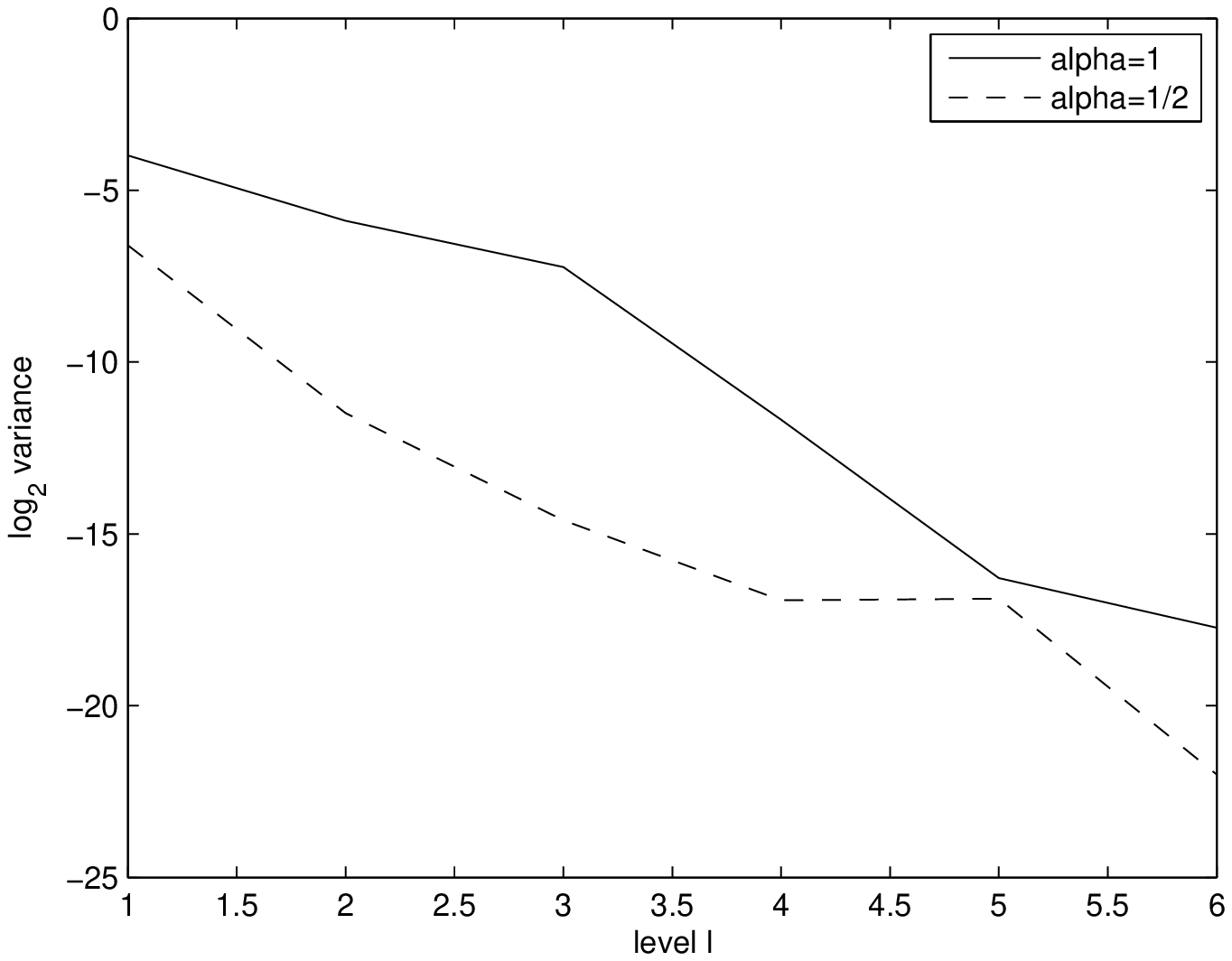}
\hfill
\includegraphics[width=.45\textwidth]{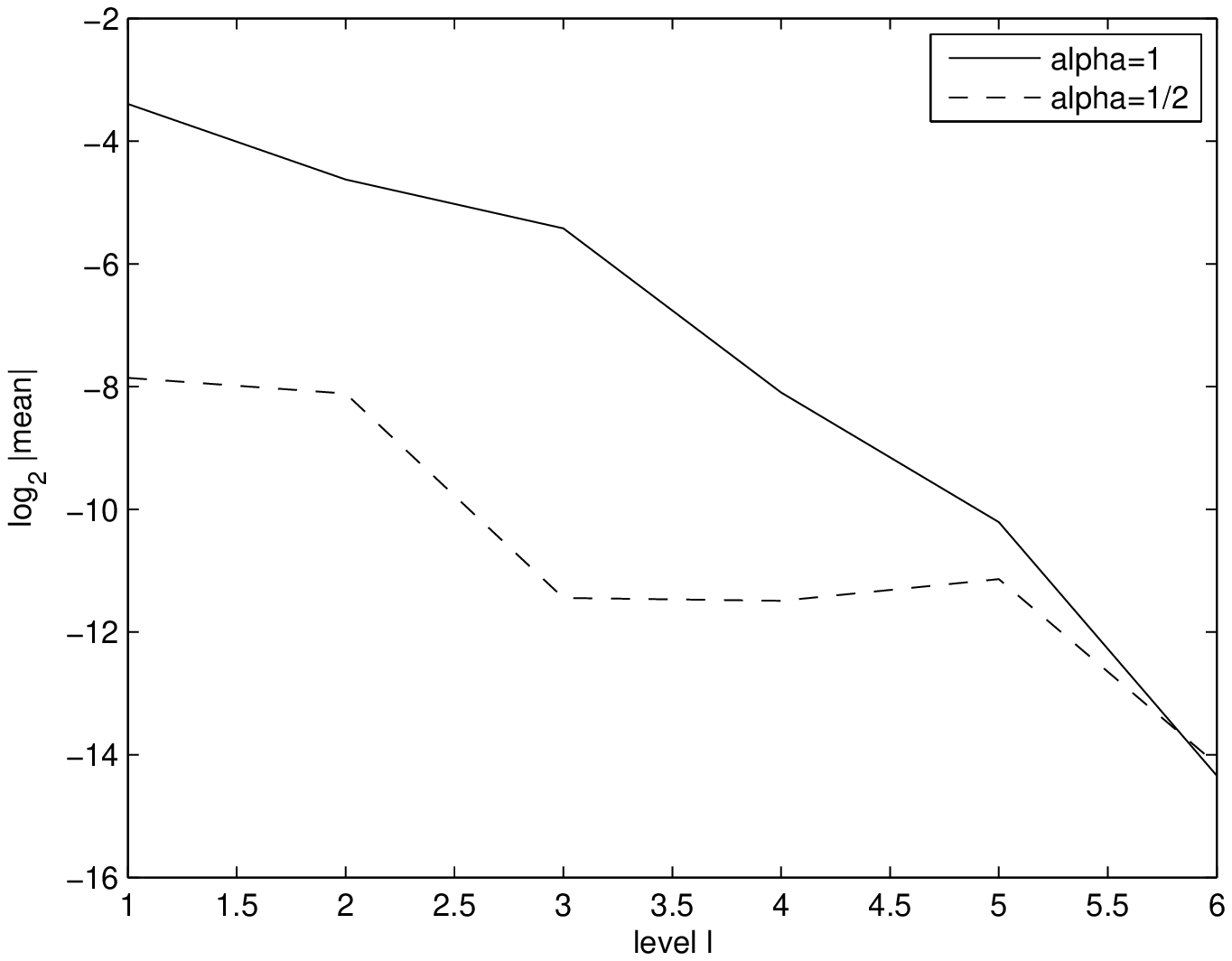}
\end{center}
\caption{
Logarithm of variance $V_l$
and mean $\EE[P_{l} - P_{l-1}]$
of the correction from refinement level $l-1$ to level $l$, for a power grid streching $x^\alpha$.
Shown is the effect of the change from the original coordinates (equivalent to the case $\alpha=1$)
to the square root stretching (for which $\alpha=1/2$).
}
\label{fig:scalingplots}
\end{figure}

The numerical results show that on coarse levels, both the mean and the variance of the estimators are much smaller
when the computation is done in $y$ coordinates with $y=\sqrt{x}$ instead of the original $x$ coordinates.
The smaller variance can be exploited by writing, in the spirit of the multi-level Monte Carlo method of
\cite{giles1, gr11},
\begin{eqnarray*}
\EE[P_L] &=& \EE\left[\sum_{l=0}^L \widehat{Y}_l\right], \\
\VV\left[\sum_{l=0}^L \widehat{Y}_l\right] &=& \sum_{l=0}^L \frac{V_l}{N_l},
\end{eqnarray*}
for $\widehat{Y}_l$ as above for $l>0$ and an estimator $\widehat{Y}_0$ to $\EE[P_0]$, and where $V_l$ is the variance of $\widehat{Y}_l$ for $N_l=1$.

Using the sum of $\widehat{Y}_l$ as multilevel estimator for $\EE[P]$, one can optimise $N_l$ to give minimal overall computational complexity for a given combined variance.
As $V_l$ has been reduced on coarser levels due to the grid stretching,
$N_l$ can be smaller there compared to the uniform grid.
Given the complexity will be largely determined by the number of samples on the coarsest levels (see \cite{gr11}), a decrease
in the variance of around 100 on those levels (see Fig.~\ref{fig:scalingplots}) allows the number of paths to decrease by a factor of 100,
which gives significant computational savings.

\section{Conclusions}
\label{sec:conclusions}

We consider implicit variants of the Milstein scheme for a class of SPDEs, and show improved stability properties.
In particular
we find that an `anti'-implicit discretisation of the deterministic part of the Milstein correction leads to an unconditionally stable scheme.
This is of some importance for stiff systems arising from the SPDE discretisation, especially for locally refined meshes, where noticable
computational savings are observed.

An important open question is a complete analysis of the numerical approximation of initial-boundary value problems for the considered
SPDE.
It might also be interesting to investigate similar ideas in the context of higher order expansions of the stochastic integral.

{
\small
\begin{center}%
{\bfseries Acknowledgements}
\vspace{-.5em}%
\end{center}

The author would like to thank Lukas Szpruch for helpful discussions on implicit Milstein schemes and Mike Giles for discussions
on Fourier analysis in this context.
}




\begin{thebibliography}{}


\bibitem[Bain \& Crisan(2009)]{crisan} Bain, A. \& Crisan, D. 
{\it Fundamentals of Stochastic Filtering}, Springer, 2009.


\bibitem[Barth {\it et al}.(2011)]{barth2}
Barth, A., Lang, A., \& Schwab, C.
Multi-level {M}onte {C}arlo finite element method for parabolic stochastic partial differential equations, SAM-Report 2011-30, 2011.

%


\bibitem[Buckwar \& Sickenberger(2011)]{buckwar} Buckwar, E. \& Sickenberger, T.
A comparative linear mean-square stability analysis of Maruyama- and Milstein-type methods,
{\it Math. Comp. Sim.}, 81:1110--1127, 2011.

\bibitem[Bujok \& Reisinger(2012)]{bujok} Bujok, K. \& Reisinger, C.
Numerical valuation of basket credit derivatives in structural jump-diffusion models, \emph{J. Comp. Fin.}, 15(4):115--158, 2012.

\bibitem[Bush {\it et al}.(2011)]{bush} Bush, N., Hambly, B., Haworth, H., Jin, L., \& Reisinger, C.
Stochastic evolution equations in portfolio credit modelling,
\emph{SIAM Fin. Math.}, 2(1):627--664, 2011.


\bibitem[Carter \& Giles(2007)]{carter} Carter, R. \& Giles, M.B.
Sharp error estimates for discretisations of the 1D convection/diffusion equation with Dirac initial data, {\it IMA J. Numer. Anal.}, 27(2):406--425, 2007.

%


\bibitem[Giles(2008)]{giles1} Giles, M.B. Multi-level Monte Carlo path simulation,
{\it Operations Research}, 56(3):981--986, 2008.


\bibitem[Giles \& Reisinger(2011)]{gr11} Giles, M.B. \& Reisinger, C.
Stochastic finite differences and multilevel {M}onte {C}arlo for a class of {SPDEs} in finance, \emph{SIAM Fin. Math.}, 3(1):572--592, 2012.


\bibitem[Gy\"ongy(1999)]{gyongy} Gy\"ongy, I.
Lattice approximations for stochastic quasi-linear parabolic partial differential equations driven by space-time white noise II, \emph{Potential Anal.}, 11:1--37, 1999.

\bibitem[Gy\"ongy \& Nualart(1997)]{gyongy97} Gy\"ongy, I. \& Nualart, D.
Implicit schemes for stochastic quasi-linear parabolic partial differential equations driven by space-time white noise, \emph{Potential Anal.}, 7:725--757, 1997.



\bibitem[Heath {\it et al}.(1992)]{heath92} Heath, D., Jarrow, R., \& Morton, A.
Bond pricing and the term structure of interest rates: {A} new methodology for contingent claims valuation, \emph{Econometrica}, 60(1):77--105, 1992.

\bibitem[Higham(2000a)]{higham} Higham, D.J. Mean-square and asymptotic stability 
of the stochastic theta method, {\it SIAM J.~Num.~Anal.}, 38(3):753--769, 2000.

\bibitem[Higham(2000b)]{higham2} Higham, D.J. 
A-stability and stochastic mean-square stability,
{\it BIT}, 40(2):404--409, 2000.

\bibitem[Jentzen \& Kloeden(2009)]{jk09} Jentzen, A. \& Kloeden, P.E.
The numerical approximation of stochastic partial differential equations,
\emph{Milan J. Math.}, 77:205--244, 2009.

\bibitem[Jentzen \& Kloeden(2010)]{jk10} Jentzen, A. \& Kloeden, P.E.
Taylor expansions of solutions of stochastic partial differential equations with additive noise,
\emph{Annals of Probab.}, 38(2):532--569, 2010.

\bibitem[Jentzen {\it et al}.(2011)]{jetal11} Jentzen, A., Kloeden, P.E., \& Winkel, G.
Efficient simulation of nonlinear parabolic {SPDEs} with additive noise,
\emph{Annals Appl. Probab.}, 21:908--950, 2011.




\bibitem[Kloeden \& Platen(1992)]{kp92} Kloeden, P.E. \& Platen, E. 
{\it Numerical Solution of Stochastic Differential Equations}, Springer, 1992.


\bibitem[Krylov \& Rozovskii(1981)] {krylov81}
Krylov, N.V. \& Rozovskii, B.L.
Stochastic evolution equations,
\emph{Journal of Soviet Mathematics}, 14:1233--1277, 1981

\bibitem[Krylov(1994)] {krylov94} Krylov, N.V.
A $W^n_2$-theory of the Dirichlet problem for SPDEs in general smooth domains, \emph{Probab. Theory Relat. Fields}, 98:389--421, 1994.

\bibitem[Krylov \& Lototsky(1998)] {kl98} Krylov, N.V. \& S.V. Lototsky
A {S}obolev space theory of SPDEs with constant coefficients on a half line, \emph{SIAM J. Math. Anal.}, 30(2):289--325, 1998.

\bibitem[Kurtz \& Xiong(1999)]{kx99} Kurtz, T.G. \& Xiong, J.
Particle representations for a class of nonlinear SPDEs,
\emph{Stoch. Proc. Appl.}, 83:103--126, 1999.

\bibitem[Lang(2010)]{lang10} Lang, A.
A Lax equivalence theorem for stochastic differential equations, \emph{J. Comp. Appl. Math.}, 234(12):3387--3396, 2010.


\bibitem[Morton \& Mayers(2005)]{mm05} Morton, K.W. \& Mayers, D.F. 
{\it Numerical Solution of Partial Differential Equations}, Cambridge University Press, Second Edition, 2005.

\bibitem[M\"uller-Gronbach {\it et al}.(2007)]{mgrw07} M\"uller-Gronbach, T., Ritter, K., \& Wagner, T. 
Optimal pointwise approximation of a linear stochastic heat equation with additive space-time white noise,
\emph{Monte Carlo and Quasi-Monte Carlo Methods 2006} (A. Keller, S. Heinrich, H. Niederreiter, eds.), pp. 577--589, Springer-Verlag Berlin, 2007.

\bibitem[M\"uller-Gronbach \& Ritter(2007)]{mgr07} M\"uller-Gronbach, T. \& Ritter, K. 
An implicit {E}uler scheme with non-uniform time discretization for heat equations with multiplicative noise,
\emph{BIT Numer. Math.}, 47:339--418, 2007.


 \bibitem[Musiela \& Zariphopoulou(2009)]{mz09}
 Musiela, M. \& Zariphopoulou, T.
 Stochastic partial differential equations and portfolio choice,
 Contemporary Quantitative Finance,
Carl Chiarella and Alexander Novikov (eds.), 195-215, 2010.

\bibitem[Pooley {\it et al}.(2003)]{pooley1} Pooley, D.M., Vetzal, K.R., \& Forsyth, P.A.
Remedies for non-smooth payoffs in option pricing,
\emph{J.~Comp.~Fin.}, 6:25--40, 2003.

\bibitem[Richtmyer \& Morton(1967)]{rm67} Richtmyer, R.D. \& Morton, K.W. 
{\it Difference Methods for Initial-Value Problems}, Wiley-Interscience, 1967.

\bibitem[Roth(2002)]{roth02} Roth, C.
Difference methods for stochastic partial differential equations, \emph{Z. Angew. Math. Mech.}, 82(11--12):821--830, 2002.


\bibitem[Saito \& Mitsui(1996)]{sm96} Saito, Y. \& Mitsui, T. 
Stability analysis of numerical schemes for stochastic differential equations,
\emph{SIAM J.~Num.~Anal.}, 33(6):2254--2267, 1996.

\bibitem[Sch\"onbucher(2003)]{schoenbucher} Sch\"onbucher, P.J.
{\it Credit Derivatives Pricing Models},
Wiley, 2003.

 \bibitem[Szpruch(2010)]{szpruch} Szpruch, L.
 Numerical Approximations of Nonlinear Stochastic Systems,
 PhD Thesis, University of Strathclyde, 2010.




\bibitem[Walsh(2005)]{walsh} Walsh, J.B.
Finite element methods for parabolic stochastic PDEs, \emph{Potential Anal.}, 23:1--43, 2005.

 \bibitem[Winter \& Tartakovsky(2002)]{wt02} Winter, C.L. \& Tartakovsky, D.M.
Groundwater flow in heterogeneous composite aquifers, \emph{Water Resour. Res.}, 38(8):23/1--23/11, 2002.


\end{thebibliography}
\end{document}